\newtheorem{theorem}{Theorem}[section]
\newtheorem{lemma}{Lemma}[section]
\numberwithin{equation}{section}
\begin{document}

\title[On Balazard, Saias, and Yor's equivalence to the Riemann hypothesis]{On Balazard, Saias, and Yor's equivalence to the Riemann Hypothesis}
\author{H. M. Bui, S. J. Lester, and M. B. Milinovich}
\subjclass[2010]{11M06, 11M26.}
\keywords{Riemann zeta-function, Riemann Hypothesis, resonance method, Omega theorems}
\address{Institut f\"{u}r Mathematik, Universit\"{a}t Z\"{u}rich, Z\"{u}rich CH-8057, Switzerland}
\email{hung.bui@math.uzh.ch}
\address{Department of Mathematics, University of Rochester, Rochester, NY 14627 USA}
\email{lester@math.rochester.edu}
\address{Department of Mathematics, University of Mississippi, University, MS 38677 USA}
\email{mbmilino@olemiss.edu}

\allowdisplaybreaks

\maketitle

\begin{abstract}
Balazard, Saias, and Yor proved that the Riemann Hypothesis is equivalent to a certain weighted integral of the logarithm of the Riemann zeta-function along the critical line equaling zero. Assuming the Riemann Hypothesis, we investigate the rate at which a truncated version of this integral tends to zero, answering a question of Borwein, Bradley, and Crandall and disproving a conjecture of the same authors. A simple modification of our techniques gives a new proof of a classical Omega theorem for the function $S(t)$ in the theory of the Riemann zeta-function.
\end{abstract}

\section{Introduction}

Let $\zeta(s)$ denote the Riemann zeta-function. In \cite{BSY}, Balazard, Saias, and Yor gave an elegant proof of the formula
\begin{equation} \label{BSY uncond}
\int_{\Re(s)=1/2} \frac{\log|\zeta(s)|}{|s|^2} |ds| = 2 \pi \sum_{\beta > 1/2} \log\left|\frac{\rho}{1\!-\!\rho}\right|,
\end{equation}
where the sum runs over the nontrivial zeros $\rho=\beta+i\gamma$ of $\zeta(s)$ with real part strictly greater than $1/2$. Since the Riemann Hypothesis (RH) states that $\beta=1/2$ for all the nontrivial zeros of $\zeta(s)$, it follows that RH is equivalent to the expression
\begin{equation} \label{RH1}
\int_{\Re(s)=1/2} \frac{\log|\zeta(s)|}{|s|^2} |ds| = 0.
\end{equation}
This equivalence led Borwein, Bradley, and Crandall \cite{BBC} to study the function
\[
I(T) =\int_{-T}^T \frac{\log|\zeta(\frac{1}{2}\!+\!it)|}{\frac{1}{4}\!+\!t^2} \, dt.
\]
Since by \eqref{RH1}, RH is equivalent to the assertion that $I(T)\to 0$ as $T\to \infty$, they asked the following question: What are the admissible positive values of $\alpha$ such that $I(T) = O(T^{-\alpha})$ as $T\to \infty$ on RH? Based upon numerical evidence, they conjectured that $I(T) = O(T^{-2})$.

In this note, we answer their question and disprove their conjecture by showing that $I(T) = O(T^{-\alpha})$ for any fixed positive $\alpha <2$ as $T \to \infty$, but that $I(T) \ne O(T^{-2})$. Precisely, we prove the following theorem.

\begin{theorem}\label{th1}
Assume RH. Then we have
\begin{equation}\label{est1}
I(T) = O\!\left( \frac{1}{T^2} \frac{\log T}{(\log\log T)^2} \right)
\end{equation}
and
\begin{equation}\label{est2}
I(T) = \Omega\! \left( \frac{1}{T^2} \frac{\sqrt{ \log T }}{(\log\log T)^{3/2}} \right)
\end{equation}
as $T \to \infty$.
\end{theorem}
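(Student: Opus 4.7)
The natural starting point is the Balazard--Saias--Yor identity itself. Under RH, \eqref{RH1} combined with the evenness of the integrand in $t$ gives
$$I(T) = -2\int_T^\infty \frac{\log|\zeta(\tfrac12+it)|}{\tfrac14+t^2}\,dt,$$
so sharp bounds on $I(T)$ correspond to sharp bounds on this one-sided tail. Setting $J(u) = \int_0^u \log|\zeta(\tfrac12+it)|\,dt$ and integrating by parts (the boundary term at $T$ cancels against the piece of the tail involving $J(T)$, since $\int_T^\infty t/(\tfrac14+t^2)^2\,dt = 1/(2(\tfrac14+T^2))$) yields the clean identity
$$I(T) = 4\int_T^\infty \frac{t\,\bigl(J(T) - J(t)\bigr)}{(\tfrac14+t^2)^2}\,dt,$$
which converts the problem into one about the oscillation of $J$ on $[T,\infty)$.

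For the upper bound \eqref{est1}, I would apply the classical conditional estimate $J(u) \ll \log u/(\log\log u)^2$, essentially due to Littlewood and Selberg under RH. Bounding $|J(T)-J(t)| \ll \log t/(\log\log t)^2$ trivially, the displayed identity gives $|I(T)| \ll \int_T^\infty \log t/(t^3(\log\log t)^2)\,dt \ll \log T/(T^2(\log\log T)^2)$.

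For the Omega bound \eqref{est2}, my strategy is to use the resonance method of Soundararajan, in the form developed by Hilberdink and Bondarenko--Seip, to produce a sign-definite contribution to the tail integral. The plan is to construct a Dirichlet resonator $R(t) = \bigl|\sum_{n\leq N} r(n)\,n^{-it}\bigr|^2$ with coefficients chosen (following Soundararajan's multiplicative construction) to correlate with the Selberg-type approximation $\log|\zeta(\tfrac12+it)| \approx \Re\sum_{p \leq x} p^{-1/2-it}$. Comparing the weighted means $\int R(t)\log|\zeta(\tfrac12+it)|\Phi(t/T)\,dt$ and $\int R(t)\Phi(t/T)\,dt$ for a suitable smooth cutoff $\Phi$ yields an averaged lower bound of order $\sqrt{\log T/\log\log T}$ on $\log|\zeta|$ weighted by $R$. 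A Chebyshev-type localization then extracts, for arbitrarily large $T$, an interval $[T,T']$ on which $\log|\zeta(\tfrac12+it)|$ makes $J(T)-J(T')$ of the required size and sign, and inserting this into the displayed identity for $I(T)$ gives \eqref{est2}.

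The principal obstacle will be the Omega direction, specifically the localization step. The resonance method produces averaged lower bounds cleanly, but to squeeze out the $(\log\log T)^{3/2}$ denominator in \eqref{est2} requires a careful calibration: the pointwise amplitude $\sqrt{\log T/\log\log T}$ alone is not enough; one must also secure sign control over a resonance set whose Lebesgue measure is comparable to a factor $1/\log\log T$ of the ambient interval, which in turn dictates the choice of resonator length $N$ as a small power of $T$ and the smoothing $\Phi$. The same framework, with only cosmetic changes, should reproduce the classical $\Omega$ theorem for $S(t)$ mentioned in the abstract, via the Cauchy--Riemann relation linking $\log|\zeta|$ on the critical line to $\arg\zeta$ off it.
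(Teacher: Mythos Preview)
Your upper bound \eqref{est1} is exactly the paper's argument: express $I(T)$ as minus twice the tail, integrate by parts, and apply the conditional estimate $\int_T^t \log|\zeta(\tfrac12+iu)|\,du \ll \log t/(\log\log t)^2$ (Lemma~\ref{vertical}).

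For \eqref{est2} you have the right tool, but the localization step you flag as the principal obstacle is not resolved by the mechanism you propose, and the paper proceeds differently. Resonating $\log|\zeta(\tfrac12+it)|$ pointwise and then invoking a Chebyshev argument yields information only about the $|R(t)|^2$-\emph{weighted} measure of the set where $\log|\zeta|$ is large, not its Lebesgue measure; since $|R(t)|^2$ can be as large as $N\sum_{n\le N}|r(n)|^2$, that set could have Lebesgue measure as small as $T/N$, and there is no obvious way to extract from it an interval on which $J(T)-J(T')$ is of the required size. Your remark about needing a resonance set of relative Lebesgue measure $1/\log\log T$ correctly diagnoses what is missing but does not supply it.

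The paper sidesteps localization entirely by resonating the \emph{already-integrated} quantity. Writing $\int_{t-h}^{t+h}\log|\zeta(\tfrac12+iu)|\,du=\Re\int_{-h}^{h}\log\zeta(\tfrac12+it+ih_1)\,dh_1$ and applying Lemma~\ref{mvt} before integrating in $h_1$ produces an extra factor $\sin(h\log n)/\log n$ in the diagonal main term; the resonator of Lemma~\ref{resonator} with $\mu=1$, $\nu=2$ (and $N=T/(\log T)^2$, not a small power of $T$) then gives directly
\[
\int_{t-h}^{t+h}\log\bigl|\zeta(\tfrac12+iu)\bigr|\,du=\Omega_\pm\!\left(h\sqrt{\frac{\log t}{\log\log t}}\right)
\]
uniformly for $h\le(\log\log t)^{-1}$ (Theorem~\ref{log integral}). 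The passage to $I(T)$ is not via your tail identity but by contradiction: if $I(t)=o\bigl(t^{-2}\sqrt{\log t}/(\log\log t)^{3/2}\bigr)$ held uniformly, then $I(u)-I(t-h)$ would satisfy the same bound for $u\in[t-h,t+h]$, and an integration by parts recovers $\int_{t-h}^{t+h}\log|\zeta(\tfrac12+iu)|\,du=o\bigl(\sqrt{\log t}/(\log\log t)^{3/2}\bigr)$, contradicting the displayed $\Omega$ result at $h=(\log\log t)^{-1}$.
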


Additionally, by estimating the tails of the integral
in \eqref{BSY uncond} we obtain an
unconditional formula for $I(T)$ in terms
of the nontrivial zeros of the Riemann zeta-function.
\begin{theorem} \label{th2}
For $T \ge 3$, we have
\begin{equation} \label{I form}
I(T)=2\pi \sum_{\substack{-T \le \gamma \le T \\ \beta >1/2}} 
\log \bigg|\frac{\rho}{1-\rho} \bigg|+O\bigg(\frac{1}{T^2}\log T \bigg). 
\end{equation}
\end{theorem}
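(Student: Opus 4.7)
The plan is to combine the Balazard--Saias--Yor formula \eqref{BSY uncond} with a contour integral that captures the tail behavior. Writing $I(T)$ as the full integral in \eqref{BSY uncond} minus its tail, and splitting the zero sum according to whether $|\gamma|\le T$, one has
\[
I(T)=2\pi\!\!\sum_{\substack{|\gamma|\le T\\ \beta>1/2}}\log\bigg|\frac{\rho}{1-\rho}\bigg|+E(T),
\]
where
\[
E(T):=2\pi\!\!\sum_{\substack{|\gamma|>T\\ \beta>1/2}}\log\bigg|\frac{\rho}{1-\rho}\bigg|-\int_{|t|>T}\frac{\log|\zeta(1/2+it)|}{1/4+t^2}\,dt.
\]
The theorem then reduces to showing $E(T)=O((\log T)/T^2)$, which is genuinely a cancellation statement since naive individual bounds on the two quantities comprising $E(T)$ are considerably larger than $(\log T)/T^2$.

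To produce the cancellation I would apply the residue theorem on the rectangle $R_T^+=[1/2,\sigma_0]\times[T,U]$ (and analogously on $R_T^-=[1/2,\sigma_0]\times[-U,-T]$), after choosing $T$ so that $\zeta(s)\ne 0$ on the lines $\Im s=\pm T$ (a small perturbation of $T$ suffices and does not affect the error order). On $R_T^+$ the function $L(s):=\log\bigl(s/(s-1)\bigr)$ is a single-valued antiderivative of $1/(s(1-s))$ satisfying $L(s)=O(1/|s|)$ as $|s|\to\infty$ and $\Re L(\rho)=\log|\rho/(1-\rho)|$ for every nontrivial zero $\rho$. The residue theorem gives $\int_{\partial R_T^+}(\zeta'/\zeta)\,L\,ds=2\pi i\sum_{\rho\in R_T^+}L(\rho)$, and integration by parts starting from the corner $\sigma_0+iT$ converts this into
\[
\int_{\partial R_T^+}\log\zeta(s)\,L'(s)\,ds=2\pi i N\,L(\sigma_0+iT)-2\pi i\!\!\sum_{\rho\in R_T^+}L(\rho),
\]
the first term on the right accounting for the $2\pi i$-jump of $\log\zeta$ around the $N$ enclosed zeros. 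Sending $\sigma_0\to\infty$ absorbs this jump term (since $L(\sigma_0+iT)=O(1/\sigma_0)$ with $N$ finite for each fixed $U$) and kills the integral on the right side of the rectangle via the exponential decay of $\zeta'/\zeta$; sending $U\to\infty$ then kills the top, where $|L'(\sigma+iU)|\ll 1/U^2$ together with $|\log\zeta(\sigma+iU)|\ll\log U$ (for bounded $\sigma$) and exponential decay for larger $\sigma$ give a combined contribution of $O((\log U)/U^2)$. Taking imaginary parts of the surviving identity yields
\[
\int_T^\infty\!\frac{\log|\zeta(1/2+it)|}{1/4+t^2}\,dt=2\pi\!\!\sum_{\substack{\gamma>T\\ \beta>1/2}}\log\bigg|\frac{\rho}{1-\rho}\bigg|+\Im\!\int_{1/2}^\infty\!\frac{\log\zeta(\sigma+iT)}{(\sigma+iT)(1-\sigma-iT)}\,d\sigma,
\]
together with its mirror image arising from $R_T^-$.

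It remains to bound the horizontal-boundary integrals by $O((\log T)/T^2)$. Both factors in $(\sigma+iT)(1-\sigma-iT)$ have modulus at least $T$ for $\sigma\ge 1/2$, so the denominator is $\ge T^2$ uniformly. Using the unconditional bounds $|\log\zeta(\sigma+iT)|\ll\log T$ for $1/2\le\sigma\le 2$ and the exponential decay of $\log\zeta(\sigma+iT)$ as $\sigma\to\infty$, splitting the $\sigma$-integral at $\sigma=2$ immediately gives $O((\log T)/T^2)$; combining with the mirror contribution at $\Im s=-T$ yields $E(T)=O((\log T)/T^2)$, which proves Theorem~\ref{th2}. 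The main technical obstacle is the integration-by-parts step: because $\log\zeta$ is multivalued on $R_T^+$, closing the contour introduces the additive $2\pi i N\,L(\sigma_0+iT)$ term, and one must take the limits in the order $\sigma_0\to\infty$ before $U\to\infty$ to absorb this cleanly (keeping $N$ finite while $L(\sigma_0+iT)\to 0$).
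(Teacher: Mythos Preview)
Your approach is essentially the paper's: express the tail $\int_{|t|>T}$ by shifting the contour of $\int \log\zeta(s)\,\frac{ds}{s(1-s)}$ from $\Re s=1/2$ to $\Re s=\infty$, pick up the zero contributions, and bound the horizontal integral at height $T$ by $O((\log T)/T^2)$; then difference with \eqref{BSY uncond}. The paper does the contour shift directly via branch cuts of $\log\zeta$, whereas you reach the same identity through the residue theorem for $(\zeta'/\zeta)L$ followed by integration by parts --- a cosmetic difference.

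Two points to tighten. First, zeros with $\beta=1/2$ lie on the left edge of your rectangle, so $\zeta'/\zeta$ has poles on $\partial R_T^+$ and the residue theorem as stated does not apply; you should either start the rectangle at $\Re s=\tfrac12+\varepsilon$ and let $\varepsilon\to 0$ after integrating by parts (the resulting integrand $\log\zeta\cdot L'$ has only integrable singularities on the critical line), or argue directly with $\log\zeta$ and branch cuts as the paper does. Second, your claimed pointwise bound $|\log\zeta(\sigma+iT)|\ll\log T$ for $1/2\le\sigma\le 2$ is false unconditionally: $\log\zeta$ blows up near any zero at height $T$. What you actually need --- and what the paper invokes --- is the integral bound $\int_{1/2}^{2}|\log\zeta(\sigma+iT)|\,d\sigma\ll\log T$ of Lemma~\ref{horizontal}; since your denominator is already $\ge T^2$ pointwise, this integral bound immediately gives the desired $O((\log T)/T^2)$.
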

Through a straightforward modification of our argument
it can be shown that the Lindel\"of Hypothesis implies
that the error term in \eqref{I form} is $o(T^{-2} \log T)$
as $T \rightarrow \infty$. We remark that the proof of Theorem \ref{th2} does not give a new proof of \eqref{BSY uncond} since
we merely truncate the integral. However, we will show how to adapt the method used to prove Theorem \ref{th2} to give a simple, 
new proof of \eqref{BSY uncond} that relies only on standard techniques in complex analysis.

In the final section, we give a new proof of a classical Omega theorem of Montgomery for the function $S(t)$.

\section{Various lemmas}

Our first two lemmas concern integrals of the logarithm of the Riemann zeta-function (one unconditional and the other conditional upon RH).

\begin{lemma} \label{horizontal}
Uniformly for $1 \leq c \leq 2$ and $t\ge 3$ we have
\[
\int_{1/2}^c \big| \log\zeta(\sigma\!+\!it)\big| \, d\sigma \ll \log t.
\]
\end{lemma}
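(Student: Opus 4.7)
The plan is to bound $|\log \zeta(\sigma+it)|$ pointwise using the standard zero-localisation of $\log \zeta$ near the critical line, and then integrate.

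First, I would invoke the classical partial fraction estimate
\[
\frac{\zeta'}{\zeta}(s) \;=\; \sum_{|\gamma-t| \le 1} \frac{1}{s-\rho} + O(\log t),
\]
valid uniformly for $-1 \le \sigma \le 2$ and $t \ge 3$ (see Titchmarsh, \S9.6). Integrating this identity horizontally from $s = 2+it$ down to $s = \sigma + it$ (along a line free of zeros, shifting $t$ by $O(1/\log t)$ if necessary, which is harmless by a density argument at the end), and using $\log\zeta(2+it) = O(1)$, one obtains the representation
\[
\log\zeta(\sigma+it) \;=\; \sum_{|\gamma-t| \le 1} \log(\sigma + it - \rho) \;+\; O(\log t),
\]
uniformly for $1/2 \le \sigma \le 2$. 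Here each branch of $\log(\sigma+it-\rho)$ is the one whose imaginary part is in $(-\pi,\pi]$; the bounded difference between branches is absorbed into the $O(\log t)$ term.

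Next I would take absolute values and integrate in $\sigma$. The $O(\log t)$ error integrates to $O(\log t)$ over the interval of length $c - 1/2 \le 3/2$. For each individual term,
\[
\int_{1/2}^{c} \bigl|\log(\sigma + it - \rho)\bigr|\, d\sigma \;\le\; \int_{1/2}^{c} \Bigl(\bigl|\log|\sigma-\beta|\bigr| + \pi\Bigr)\, d\sigma \;\ll\; 1,
\]
because $|\sigma+it-\rho| \le \sqrt{(c-\beta)^2 + 1} \le \sqrt{5}$ gives an $O(1)$ upper-log contribution, while the possible logarithmic singularity at $\sigma = \beta$ (if $\beta \in [1/2,c]$) is integrable with uniform bound.

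Finally, the classical Riemann--von Mangoldt estimate
\[
N(t+1) - N(t-1) \;\ll\; \log t
\]
controls the number of terms in the sum, so the total contribution from the sum is $O(\log t) \cdot O(1) = O(\log t)$, giving the desired bound. The main potential obstacle is the justification of the representation of $\log\zeta$ across zeros: one must choose the horizontal integration path to avoid them and keep track of branch jumps, but since we only need the $L^1$ bound on the segment (and the set of bad ordinates has measure zero), this is a routine technicality rather than a genuine difficulty.
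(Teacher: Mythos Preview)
Your argument is correct and is precisely the standard route to this estimate. The paper itself gives no proof at all --- it simply cites Lemma~$\beta$ of Titchmarsh's 1927 paper --- and what you have written is essentially that classical argument: localise $\log\zeta$ via the partial-fraction approximation for $\zeta'/\zeta$, integrate horizontally from $\sigma=2$, and control the $O(\log t)$ local zeros using $N(t+1)-N(t-1)\ll\log t$.

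One cosmetic remark: the displayed pointwise inequality $|\log(\sigma+it-\rho)|\le |\log|\sigma-\beta||+\pi$ is not literally valid for every $\sigma$ (it can fail when $|\sigma-\beta|<1<|\sigma+it-\rho|$), but your accompanying explanation already covers this correctly: when $|\sigma+it-\rho|\ge 1$ the log is bounded by $\log\sqrt{5}=O(1)$, and when $|\sigma+it-\rho|<1$ one has $-\log|\sigma+it-\rho|\le -\log|\sigma-\beta|$, whose integral over $[1/2,c]$ is $O(1)$. So the conclusion $\int_{1/2}^c|\log(\sigma+it-\rho)|\,d\sigma\ll 1$ stands, and the lemma follows.
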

\begin{proof}
See Lemma $\beta$ of Titchmarsh \cite{T1}. 
\end{proof}

\begin{lemma}\label{vertical}
Assume RH. Then for $t \geq T \ge 3$ we have
$$
\int_T^t \log \big|\zeta(\tfrac12\!+\!iu)\big| \, du \ll \frac{\log t}{(\log \log t)^2}.
$$
\end{lemma}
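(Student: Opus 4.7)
The plan is to express the integral on the critical line in terms of integrals on the line $\Re(s)=2$ and on two horizontal segments by applying Cauchy's theorem to $\log\zeta(s)$ on the rectangle $R$ with vertices $\tfrac12+iT$, $2+iT$, $2+it$, $\tfrac12+it$. Under RH, $\log\zeta(s)$ is analytic in the open interior of $R$ since $\zeta$ has no zeros with $\Re(s)>\tfrac12$, so equating imaginary parts in $\oint_{\partial R}\log\zeta(s)\,ds = 0$ produces
\begin{equation*}
\int_T^t \log\bigl|\zeta(\tfrac12+iu)\bigr|\,du = \int_T^t\log|\zeta(2+iu)|\,du + \int_{1/2}^2 \arg\zeta(\sigma+iT)\,d\sigma - \int_{1/2}^2 \arg\zeta(\sigma+it)\,d\sigma.
\end{equation*}
Each of the three pieces on the right will be estimated in turn.

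For the vertical piece at $\Re(s)=2$, I would expand the absolutely convergent Dirichlet series $\log\zeta(2+iu)=\sum_{p,k}(kp^{k(2+iu)})^{-1}$ and integrate termwise, which gives
\begin{equation*}
\int_T^t \log|\zeta(2+iu)|\,du \ll \sum_{p,k}\frac{1}{k^2 p^{2k}\log p} \ll 1.
\end{equation*}
For a horizontal $\arg\zeta$-integral at height $t$ (the estimate at height $T$ is identical since $T\le t$), I would split $[\tfrac12,2]$ into the three subintervals $[\tfrac12,\,\tfrac12+\tfrac{1}{\log\log t}]$, $[\tfrac12+\tfrac{1}{\log\log t},\,1]$, and $[1,2]$. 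On the middle subinterval the conditional Littlewood--Selberg bound $|\arg\zeta(\sigma+it)|\ll(\log t)^{2-2\sigma}/\log\log t$ gives
\begin{equation*}
\int_{1/2+1/\log\log t}^1 \frac{(\log t)^{2-2\sigma}}{\log\log t}\,d\sigma \ll \frac{\log t}{(\log\log t)^2},
\end{equation*}
the estimate being dominated by the left endpoint where $(\log t)^{2-2\sigma}\asymp\log t$. On the short initial subinterval the cruder Selberg bound $|\arg\zeta(\sigma+it)|\ll\log t/\log\log t$ (valid under RH and extendable slightly to the right of the critical line) contributes the same order, and on $[1,2]$ the trivial Dirichlet-series bound gives $O(\log\log t)$, which is absorbed.

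The main technical point is the justification of the contour identity in the presence of the (infinitely many) zeros on the left edge $\Re(s)=\tfrac12$, at which $\log\zeta$ is singular. I would handle this by first shifting the left edge to $\Re(s)=\tfrac12+\delta$, where $\log\zeta$ is genuinely analytic under RH, applying Cauchy's theorem cleanly in the smaller rectangle, and then letting $\delta\to 0^+$: the singularities of $\log|\zeta(\tfrac12+iu)|$ are logarithmic and therefore integrable, and the jumps of $\arg\zeta$ across zeros are absorbed in a Littlewood-lemma style principal-value interpretation. Combining the three estimates then establishes the required bound.
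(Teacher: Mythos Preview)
Your proposal is correct and follows essentially the same route as the paper: both apply Cauchy's theorem to $\log\zeta$ on a rectangle with left edge on the critical line, reduce to bounding $\int_{1/2}^{c}\arg\zeta(\sigma+it)\,d\sigma$, and split that horizontal integral at $\sigma=\tfrac12+\tfrac{1}{\log\log t}$, using the RH bound $\arg\zeta\ll \log t/\log\log t$ on the short piece near the critical line and $\arg\zeta\ll (\log t)^{2-2\sigma}/\log\log t$ on the next piece. The only differences are cosmetic---the paper takes the right edge at $\sigma=3/2$ rather than $2$ (absorbing the right-edge vertical integral into an $O(1)$, exactly as you do), and splits the horizontal integral at $3/4$ rather than $1$; your extra care with the $\delta\to 0^+$ limiting argument is implicit in the paper's one-line appeal to Cauchy's theorem.
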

\begin{proof}
Under the assumption of RH, Cauchy's theorem implies that
\begin{equation} \notag
\int_T^t \log\big|\zeta(\tfrac12\!+\!iu)\big| \, du= - \int_{1/2}^{3/2}\arg \zeta(\sigma\!+\!it)\, d\sigma
+ \int_{1/2}^{3/2}\arg \zeta(\sigma\!+\!iT) \, d\sigma+O(1).
\end{equation}
We will bound the first integral on the right-hand side of this equation.
The second integral can be handled similarly.

Let $\sigma_t=1/2+(\log \log t)^{-1}$ and write
\begin{equation} \label{integral arg}
\int_{1/2}^{3/2} \! \arg\zeta(\sigma\!+\!it) \, d\sigma=I_1+I_2+I_3,
\end{equation}
where $I_1$ is the portion of the integral over $[1/2, \sigma_t)$, $I_2$ is the portion over $[\sigma_t, 3/4)$,
and $I_3$ is the portion over $[3/4, 3/2]$. By Theorem 13.21 of \cite{MV}, we have
$\arg \zeta(\sigma+it)\ll \log t/\log \log t$ for $\sigma \geq 1/2$. Thus,
\[
I_1 \ll \frac{\log t}{(\log \log t)^2}.
\]
For $\sigma_t \le \sigma < 3/4$ it follows from Corollary 13.16 of  \cite{MV}
that
$\arg\zeta(\sigma+it) \ll (\log t)^{(2-2\sigma)}/\log \log t$. Hence
\[
I_2 \ll \frac{\log t}{ (\log \log t)^2}.
\]
Finally, Corollary 13.16  of  \cite{MV} also implies that $\arg\zeta(\sigma+it) \ll (\log t)^{1/2}$ uniformly for $3/4\le \sigma \le 3/2$, and we have
\[
I_3 \ll(\log t)^{1/2}.
\]
The lemma now follows by inserting the estimates for $I_1, I_2$ and $I_3$ into \eqref{integral arg}.
\end{proof}

Next we prove two key lemmas which are used to prove the estimate \eqref{est2} in Theorem \ref{th1}.

\begin{lemma} \label{mvt}
Assume RH. For any sequence of complex numbers $\{r(n)\}$ let 
\[
R(t)=\sum_{n\le N} \frac{r(n)}{n^{it}}.
\]
Then uniformly for $1/2 \le \alpha \le 2$ , $h\in\mathbb{R}$, $N>1$, $T\ge 3$, and $\varepsilon>0$ we have
\begin{equation*}
\int_T^{2T} \log \zeta(\alpha\!+\!it\!+\! ih) \big| R(t) \big|^2dt=T \sum_{mn \leq N} \frac{\Lambda(n) r(m) \overline{r(mn)} }{n^{\alpha+ih} \log n}+O\bigg( N (\log TN)^{3/2+\varepsilon} \sum_{n \leq N}|r(n)|^2\bigg).
\end{equation*}
\end{lemma}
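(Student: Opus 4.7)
The plan is to move the $t$-integration from the line $\Re s = \alpha$, where $s = \alpha + it + ih$, to a line $\Re s = \sigma_0$ just above $1$ where the Dirichlet series $\log\zeta(s) = \sum_{k \ge 2}\Lambda(k)/(k^s \log k)$ converges absolutely. First, expand $|R(t)|^2 = \sum_{m, n \le N} r(m)\overline{r(n)}(n/m)^{it}$. Using the identity $(n/m)^{it} = (n/m)^{s-\alpha-ih}$ with $s = \alpha+it+ih$, each term in the double sum becomes a vertical contour integral
\[
-i\int_{\alpha+i(T+h)}^{\alpha+i(2T+h)} \log\zeta(s)\,(n/m)^{s-\alpha-ih}\,ds.
\]
Under RH, $\log\zeta$ is holomorphic in $\Re s > 1/2$, so Cauchy's theorem allows shifting the contour to $\Re s = \sigma_0 := 1 + 1/\log N$ (with a standard indentation around any critical zeros when $\alpha = 1/2$), producing two horizontal contributions at heights $T+h$ and $2T+h$.

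On the shifted contour I insert the Dirichlet series and compute $\int_T^{2T}(n/(mk))^{it}\,dt = T\,\mathbf{1}_{\{n=mk\}} + O(1/|\log(n/(mk))|)$. The diagonal $n = mk$ contributes $T(n/m)^{\sigma_0-\alpha} \cdot \Lambda(n/m)/((n/m)^{\sigma_0+ih}\log(n/m))$; here the shift factor $(n/m)^{\sigma_0-\alpha}$ cancels with $(n/m)^{-\sigma_0}$ from the Dirichlet series to leave $T\Lambda(n/m)/((n/m)^{\alpha+ih}\log(n/m))$. Summing over $(m,n)$ and renaming $n/m \mapsto n$ recovers exactly the stated main term.

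For the error, the horizontal pieces are controlled by Lemma \ref{horizontal}, which gives $\int_\alpha^{\sigma_0}|\log\zeta(\sigma + iY)|\,d\sigma \ll \log T$; combined with the majorization $(n/m)^{\sigma-\alpha} \le \max(1, (n/m)^{\sigma_0-\alpha})$ and Cauchy-Schwarz over $(m,n)$ using $\sum_{n \le N}n^{2(\sigma_0-\alpha)} \ll N^2$ and $\sum_{m \le N} m^{-2(\sigma_0-\alpha)} \ll \log N$, this yields $O(N(\log N)^{1/2}\log T\,\sum|r(n)|^2)$. The off-diagonal ($n \ne mk$) terms on the shifted contour each contribute at most $O(\log N)$ per pair $(m,n)$: the far-diagonal $k$ (those with $|\log(n/(mk))|$ bounded away from $0$) are handled by $\sum_k \Lambda(k)/(k^{\sigma_0}\log k) \ll \zeta(\sigma_0) \ll \log N$, and the near-diagonal $k$ use $\sum_{k: mk \ne n}1/|n-mk| \ll \log(n/m)/m$ combined with $(m/n)^{\sigma_0-1} \asymp 1$; summing these against $(n/m)^{\sigma_0-\alpha}|r(m)r(n)|$ gives $O(N(\log N)^{3/2}\sum|r(n)|^2)$. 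The main technical obstacle is the coordination of these estimates, which forces the choice $\sigma_0 - 1 \asymp 1/\log N$: a substantially larger $\sigma_0$ blows up the shift factor $(n/m)^{\sigma_0-\alpha}$ and defeats the Cauchy-Schwarz argument, while a smaller one renders the Dirichlet expansion ineffective. Everything ultimately fits inside the stated $N(\log TN)^{3/2+\varepsilon}\sum|r(n)|^2$ error.
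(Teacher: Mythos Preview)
Your overall strategy coincides with the paper's: shift the $t$-integral from $\Re s=\alpha$ to $\Re s=c:=1+1/\log N$ by Cauchy's theorem (valid on RH), control the two horizontal segments by combining Lemma~\ref{horizontal} with a pointwise bound on $\big|\mathcal R(s-\alpha-ih)\,\overline{\mathcal R}(\alpha+ih-s)\big|$, and on the right edge expand $\log\zeta$ as a Dirichlet series so that the diagonal $n=km$ gives the main term. The paper obtains the pointwise bound $\ll N(\log N)^{1/2}\sum|r(n)|^2$ via the weighted inequality $2|ab|\le\Delta|a|^2+\Delta^{-1}|b|^2$ with an optimized $\Delta$; your Cauchy--Schwarz reaches the same conclusion, although the intermediate claim $\sum_{m\le N}m^{-2(\sigma_0-\alpha)}\ll\log N$ holds only for $\alpha$ near $1/2$ --- for general $\alpha\in[1/2,c]$ one should instead note that the \emph{product} of your two auxiliary sums is always $\ll N^{2}\log N$.

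The off-diagonal treatment, however, contains a genuine error. You claim that for fixed $(m,n)$ the near-diagonal $k$ obey
\[
\sum_{k:\,mk\ne n}\frac{1}{|n-mk|}\ \ll\ \frac{\log(n/m)}{m}.
\]
This is false. Writing $n=qm+r$ with $0\le r<m$, the integers $|n-mk|$ for $k$ near $n/m$ run through $r,\ m-r,\ m+r,\ 2m-r,\ldots$, so the sum is $\asymp 1/r+1/(m-r)+m^{-1}\log(n/m)$; whenever $r\in\{1,m-1\}$ this is $\asymp 1$, not $O(m^{-1}\log(n/m))$. (Concretely: $m=100$, $n=301$, $k=3$ already gives a single term equal to $1$, against your claimed bound of about $0.01$.) After reinstating the factor $\approx m$ coming from $\Lambda(k)k^{-\sigma_0}(\log k)^{-1}\cdot|\log(n/(mk))|^{-1}$, such pairs contribute $\asymp m$ apiece, and summing this against $(n/m)^{\sigma_0-\alpha}|r(m)r(n)|$ does not fit inside the stated error without additional argument. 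The paper avoids this by reversing the order: it fixes $k$ first, applies the same $\Delta$-weighted AM--GM to $|r(m)r(n)|(n/m)^{c-\alpha}$, bounds the resulting inner sums over $m$ and over $n$ separately by standard near-diagonal estimates, optimizes $\Delta$, and only then sums $\sum_{k\ge2}\Lambda(k)/(k^{c}\log k)\ll\log\log N$.
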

\begin{proof}
Let $c=1+(\log N)^{-1}$, $\mathcal R(s)=\sum_{n \leq N} r(n) \, n^{-s}$, and $\overline{\mathcal R}(s)=\sum_{n \leq N} \overline{r(n)} \, n^{-s}$. We shall consider the case $1/2\leq\alpha\leq c$. The remaining case $c\leq\alpha\leq2$ is treated similarly to $\mathcal{I}_3$ below.

By the elementary inequality $2|ab| \leq |a|^2+|b|^2$ it follows that 
\[
|r(m) r(n)| \Big(\frac{m}{n} \Big)^{\sigma-\alpha} \leq \frac12\Bigg(\frac{|r(m)|^2 \Delta}{n^{2(\sigma-\alpha)}}+ \frac{|r(n)|^2 m^{2(\sigma-\alpha)}}{\Delta}\Bigg)
\]
for any $\Delta>0$. Thus,
\begin{equation*}
\begin{split}
\big| \mathcal R(s-\alpha-ih)\overline{\mathcal{ R}}(\alpha+ih-s)\big| \leq & \sum_{m,n \leq N} |r(m) r(n)| \Big( \frac{m}{n}\Big)^{\sigma-\alpha} \\
\ll &
\Bigg( \Delta \sum_{m \leq N} \frac{1}{m^{2(\sigma-\alpha)}} +\frac{1}{\Delta} \sum_{m \leq N} m^{2(\sigma-\alpha)}\Bigg) \sum_{n \leq N} |r(n)|^2
\\
\ll & \Bigg( \Delta N^{1-2(\sigma-\alpha)} \log N +\frac{N^{1+2(\sigma-\alpha)}}{\Delta} \Bigg) \sum_{n \leq N} |r(n)|^2
\end{split}
\end{equation*}
uniformly for $\alpha \le \sigma \le c$. Choosing $\Delta=N^{2(\sigma-\alpha)}(\log N)^{-1/2}$, we conclude that
\begin{equation} \label{R bd}
\big| \mathcal  R(s-\alpha-ih)\overline{\mathcal{ R}}(\alpha+ih-s)\big|  \ll N (\log N)^{1/2} \sum_{n \leq N} |r(n)|^2
\end{equation}
uniformly for $\alpha \leq \sigma \leq c$.

Let $\mathscr{C}$ be the positively oriented rectangle with vertices at $\alpha+i(T+h)$, $c+i(T+h)$, $c+i(2T+h)$, and $\alpha+i(2T+h)$. We write
\[
i \int_{\mathscr{C}} \log \zeta(s) \, \mathcal R(s-\alpha-ih) \, \overline{\mathcal R}(\alpha+ih-s) \, ds=\mathcal I_1+ \mathcal I_2+ \mathcal I_3+\mathcal I_4,
\]
where $\mathcal I_1$, $\mathcal I_2$, $\mathcal I_3$, $\mathcal I_4$ are the parts of the integral over the left, bottom, right, and top
edges of $\mathscr{C}$, respectively. Cauchy's theorem implies that
\[
\mathcal I_1+\mathcal I_2+\mathcal I_3+\mathcal I_4=0.
\]
Thus, after an obvious variable change, we have 
\begin{equation} \label{start}
\int_T^{2T} \log \zeta(\alpha\!+\!it\!+\! ih) \, \big| R(t) \big|^2 \, dt=-\mathcal I_3+O\big(|\mathcal I_2|\!+\!|\mathcal I_4|\big).
\end{equation}
By \eqref{R bd} and Lemma \ref{horizontal} we have
\begin{equation} \label{two four}
|\mathcal I_2|\!+\! |\mathcal I_4| \ll N (\log NT)^{3/2} \sum_{n \leq N} |r(n)|^2.
\end{equation}
It remains to estimate $\mathcal I_3$.

In $\mathcal I_3$, we express $\log\zeta(s)$ as an absolutely convergent Dirichlet, interchange summation and integration, and then integrate term-by-term to obtain 
\begin{equation} \label{main id}
-I_3= T \sum_{mn \leq N}\frac{\Lambda(n) r(m) \overline{r(mn)} }{n^{\alpha+ih} \log n}+
O\Bigg(\sum_{k=2}^{\infty} \sum_{\substack{m,n\le N\\n \neq km}}  \frac{\Lambda(k)}{k^{c} \log k}  \frac{|r(m)r(n)|}{|\log \frac{n}{km}|}  \Big( \frac{n}{m}\Big)^{c-\alpha}\Bigg).
\end{equation}
To bound the error term, we first note that
\begin{eqnarray*}
\sum_{\substack{m,n \leq N \\n \neq km}}
\frac{|r(m)r(n)|}{|\log \frac{n}{km}|} 
\Big( \frac{n}{m}\Big)^{c-\alpha} 
&\ll&  \Delta \sum_{n \leq N} |r(n)|^2
\sum_{\substack{m \leq N \\ n \neq km}} \frac{1}{m^{2(c-\alpha)} |\log \frac{n}{km} |}\\
&&\qquad\qquad+
\frac{1}{\Delta} \sum_{m \leq N} |r(m)|^2
\sum_{\substack{n \leq N \\ n \neq km}} \frac{n^{2(c-\alpha)}}{|\log \frac{n}{km}|}
\end{eqnarray*}
for any $\Delta>0$. Next, using standard techniques, we have
\[
\sum_{\substack{m \leq N \\ n \neq km}} \frac{1}{m^{2(c-\alpha)} |\log \frac{n}{km} |}  \ll N^{1-2(c-\alpha)} (\log N)^2 \quad \text{and} \quad \sum_{\substack{n \leq N \\ n \neq km}} \frac{n^{2(c-\alpha)}}{|\log \frac{n}{km}|} \ll N^{1+2(c-\alpha)}\log N
\]
uniformly in $k$.
Hence
\[
\sum_{\substack{m,n \leq N \\n \neq km}} \frac{|r(m)r(n)|}{|\log \frac{n}{km}|}  \Big( \frac{n}{m}\Big)^{c-\alpha}  \ll \Bigg( \Delta N^{1-2(c-\alpha)} (\log N)^2 + \frac{N^{1+2(c-\alpha)}\log N}{\Delta} \Bigg) \sum_{n \leq N} |r(n)|^2.
\]
Choosing $\Delta=N^{2(c-\alpha)}(\log N)^{-1/2}$, it follows that the big-$O$ term in \eqref{main id} is 
\[
\ll N (\log N)^{3/2} \sum_{k=2}^{\infty} \frac{\Lambda(k)}{k^{c}\log k} \sum_{n \leq N} |r(n)|^2 \ll N(\log N)^{3/2} \log \log N \sum_{n \leq N} |r(n)|^2.
\]
The lemma now follows from this estimate and \eqref{start}--\eqref{main id}.
\end{proof}

\begin{lemma} \label{resonator}
Let $\mu$ and $\nu$ be fixed non-negative integers, $N>1$, and $h \in [0,(\log\log N)^{-1}]$. Then there exist two real-valued arithmetic functions $r^{\pm}\!(n)$ and a positive constant $C$ \textup{(}depending on $\mu$ and $\nu$\textup{)} such that
\[
\sum_{mn\leq N}\frac{\Lambda(n)\sin^\mu(h\log n)r^+\!(m) r^+\!(mn)}{\sqrt{n}(\log n)^\nu} \Bigg/ \sum_{n\leq N}|r^+\!(n)|^2 \ \ge \ C \, h^\mu (\log N)^{1/2}(\log\log N)^{\mu-\nu+1/2}
\]
and
\[
\sum_{mn\leq N}\frac{\Lambda(n)\sin^\mu(h\log n)r^-\!(m) r^-\!(mn)}{\sqrt{n}(\log n)^\nu} \Bigg/ \sum_{n\leq N}|r^-\!(n)|^2 \ \le \ -C \, h^\mu (\log N)^{1/2}(\log\log N)^{\mu-\nu+1/2}.
\]
\end{lemma}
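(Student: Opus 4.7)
The plan is to employ a Soundararajan-style multiplicative resonator. I would let $\mathcal{P}$ denote the primes in $[\log N,(\log N)^2]$, so that $\log p\asymp\log\log N$ and $h\log p\le 2$ for every $p\in\mathcal{P}$, which forces $|\sin(h\log p)|\asymp h\log p$ throughout $\mathcal{P}$. I would set $\epsilon(p)=\operatorname{sgn}(\sin(h\log p))$ when $\mu$ is odd and $\epsilon(p)=1$ when $\mu$ is even, so that $\epsilon(p)\sin^\mu(h\log p)=|\sin(h\log p)|^\mu$, and then take $r^\pm$ to be the multiplicative function supported on squarefree integers with prime factors in $\mathcal{P}$ given by $r^\pm(p)=\pm\epsilon(p)L/\sqrt{p}$, where $L=c\sqrt{\log N/\log\log N}$ for a sufficiently small absolute constant $c>0$.

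Since $r^\pm$ is supported on squarefree integers, only the terms $n=p$ prime survive in the $\Lambda$-sum, and multiplicativity ($r^\pm(mp)=r^\pm(m)r^\pm(p)$ when $p\nmid m$) gives
\begin{equation*}
\sum_{mn\le N}\frac{\Lambda(n)\sin^\mu(h\log n)r^\pm(m)r^\pm(mn)}{\sqrt{n}(\log n)^\nu}=\pm\sum_{p\in\mathcal{P}}\frac{|\sin(h\log p)|^\mu L}{p(\log p)^{\nu-1}}\sum_{\substack{m\le N/p\\ p\nmid m}}|r^\pm(m)|^2.
\end{equation*}
Since the denominator of the lemma is $\sum_{m\le N}|r^\pm(m)|^2$, it will then suffice to establish the concentration estimate
\begin{equation*}
\sum_{\substack{m\le N/p\\ p\nmid m}}|r^\pm(m)|^2\gg\sum_{m\le N}|r^\pm(m)|^2
\end{equation*}
uniformly for $p\in\mathcal{P}$, and afterwards to bound the remaining $p$-sum from below.

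The concentration estimate is where the main technical work lies. I would prove it by computing the first two moments of $\log m$ under the probability measure proportional to $|r^\pm(m)|^2=\prod_{p\mid m}L^2/p$. An Euler-product calculation combined with Mertens-type asymptotics yields mean of order $L^2\log\log N\asymp c^2\log N$ and variance of order $L^2(\log\log N)^2\asymp\log N\cdot\log\log N$. Taking $c$ small enough makes the mean a definite fraction below $\log(N/p)$ for every $p\in\mathcal{P}$, and Chebyshev's inequality then confines almost all the mass of the measure to $[1,N/p]$; a short bookkeeping step comparing the full resonator with the resonator having $p$ removed converts this into the displayed concentration bound.

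To conclude, the estimate $|\sin(h\log p)|\gg h\log p\asymp h\log\log N$ on $\mathcal{P}$ gives
\begin{equation*}
\sum_{p\in\mathcal{P}}\frac{|\sin(h\log p)|^\mu L}{p(\log p)^{\nu-1}}\gg L h^\mu(\log\log N)^{\mu-\nu+1}\sum_{p\in\mathcal{P}}\frac{1}{p}\gg h^\mu(\log N)^{1/2}(\log\log N)^{\mu-\nu+1/2},
\end{equation*}
where I used Mertens' theorem to get $\sum_{p\in\mathcal{P}}1/p\asymp 1$ and then substituted $L=c\sqrt{\log N/\log\log N}$. The sign $\pm$ is determined by whether one uses $r^+$ or $r^-$, which yields both inequalities of the lemma simultaneously.
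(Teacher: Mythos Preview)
Your approach is correct and is essentially the paper's: a Soundararajan resonator, multiplicative and squarefree, supported on primes of size $(\log N)^{O(1)}$ with weight $\asymp L/\sqrt{p}$ for $L\asymp\sqrt{\log N/\log\log N}$, together with a sign twist to produce $r^-$. The only notable difference is the truncation step: the paper controls the tail $\sum_{m>N/p}|r(m)|^2$ by Rankin's trick (bounding it by $(p/N)^\alpha\prod_p(1+p^\alpha r(p)^2)$ for a small $\alpha$), whereas you compute the mean and variance of $\log m$ under the resonator measure and apply Chebyshev---both are standard and work here, and your $\epsilon(p)$ device is harmless but unnecessary since $0\le h\log p\le 2<\pi$ on $\mathcal P$ already gives $\sin(h\log p)\ge 0$.
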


\begin{proof}
Our proof of this lemma is based upon the ideas in the proof of Theorem 2.1 of Soundararajan \cite{Sound}. We shall prove the first inequality. The second inequality can be proved similarly by choosing $r^-\!(n)=\mu(n)r(n)$, where $\mu(n)$ is the M\"{o}bius function and $r(n)$ is defined below. Throughout the proof, the letter $p$ denotes a prime number.

We choose $r^+\!(n)$ to be the multiplicative function $r(n)$ supported on square-free integers and defined on primes $p$ by
\begin{equation*}
r(p)=\left\{ \begin{array}{ll}
\frac{L(\log p)^{\nu}}{\sqrt{p}}, &\qquad \textrm{if $A<p<B$,}\\
0, & \qquad\textrm{otherwise.} 
\end{array} \right.
\end{equation*}
Here the parameters $A$, $B$ and $L$ are chosen so that
\begin{equation*}\label{200}
A=L^2(\log L)^{2\nu+1},\quad B=L^3,\quad\textrm{and}\quad L^2(\log B)^{2\nu+1}=(2\nu+1)\log N.
\end{equation*}
We note that with our choice we have $r(p)\ll1$,  $L \asymp(\log N)^{1/2}(\log\log N)^{-\nu-1/2}$, and  $\log B < (3/2) \log\log N$, so that $\sin(h \log p) >(h \log p)/2$ for $h \in [0,(\log\log N)^{-1}]$ and $p < B$.
 
With $r^+\!(n)=r(n)$, the denominator on the left-hand side of the first inequality is
\begin{equation*}\label{26}
\sum_{n\leq N}|r(n)|^2\leq \sum_{n=1}^{\infty}r(n)^2=\prod_{p}\big(1+r(p)^2\big).
\end{equation*}
To estimate the numerator, we use Rankin's trick which asserts that for any sequence of non-negative real numbers $\{a_n\}$, and any $\alpha>0$ we have
\begin{displaymath}
\sum_{n> x}a_n\leq x^{-\alpha}\sum_{n>x}a_nn^\alpha\leq x^{-\alpha}\sum_{n=1}^{\infty}a_nn^\alpha.
\end{displaymath}
Therefore,
\begin{eqnarray}\label{7}
&&\sum_{mn\leq N}\frac{\Lambda(n)\sin^\mu(h\log n)r(m) r(mn)}{\sqrt{n} \, (\log n)^\nu}\nonumber\\ 
&&\qquad\qquad\qquad= \sum_{n\leq N}\frac{\Lambda(n)\sin^\mu(h\log n)r(n)}{\sqrt{n} \, (\log n)^\nu}\sum_{\substack{m\leq N/n\\(m,n)=1}}r(m)^2\nonumber\\
&&\qquad\qquad\qquad=\sum_{n\leq N}\frac{\Lambda(n)\sin^\mu(h\log n)r(n)}{\sqrt{n} \, (\log n)^\nu} \prod_{p\nmid n}\big(1+r(p)^2\big)\\
&& \qquad\qquad\qquad\qquad\qquad\qquad+O\Bigg(h^\mu\sum_{n\leq N}\frac{\Lambda(n)r(n)}{\sqrt{n} \, (\log n)^{\nu-\mu}}\Big(\frac{n}{N}\Big)^\alpha\prod_{p\nmid n}\big(1+p^\alpha r(p)^2\big)\Bigg).\nonumber
\end{eqnarray}
Here we have used the inequality $|\sin x| \le x$ for $x \ge 0$ in the big-$O$ term. Note that $r(n)$ is supported on square-free integers, and the inequalities $\sin(h\log p) \gg h \log p$ and $r(p)\ll1$ hold for all $p<B$. 
Using these observations we see that
the ratio of the main term in \eqref{7} to $\sum_{n\le N} |r(n)|^2$
is 
\begin{eqnarray*}
&\gg&\sum_{p\leq N}\frac{\sin^\mu(h\log p) \, r(p)}{\sqrt{p} \, (\log p)^{\nu-1} (1\!+\!r(p)^2)} 
=L \sum_{A<p<B}\frac{\sin^\mu(h\log p)\log p}{p \, (1\!+\!r(p)^2) } \nonumber\\
&\gg& L \sum_{A<p<B}\frac{h^\mu (\log p)^{\mu+1}}{p}= Lh^\mu \, \Bigg( \frac{(\log B)^{\mu+1}}{\mu\!+\!1} - \frac{(\log A)^{\mu+1}}{\mu\!+\!1} + O\big( (\log B )^\mu\big) \Bigg)\nonumber\\
&\gg& Lh^\mu (\log\log N)^{\mu +1} \gg h^\mu (\log N)^{1/2}(\log\log N)^{\mu-\nu+1/2}.
\end{eqnarray*}
On the other hand, the error term in \eqref{7} is
\begin{equation}\label{error in 7}
\begin{split}
&\ll h^\mu L N^{-\alpha} \, \Bigg( \sum_{A<p<B} \frac{(\log p)^{\mu+1}}{p^{1-\alpha} (1+p^\alpha r(p)^2)}  \Bigg) \prod_{p}\big(1+p^\alpha r(p)^2\big)
\\
&\ll \big( 1\!+\!\alpha \log B \big) \, h^\mu L N^{-\alpha} \, \Bigg( \sum_{A<p<B} \frac{(\log p)^{\mu+1}}{p}  \Bigg) \prod_{p}\big(1+p^\alpha r(p)^2\big).
\end{split}
\end{equation}
Note that $B=L^3$ and $L \ll (\log N)^{1/2}$. So by Rankin's trick (with exponent taken to be $1/2$) we have
\begin{equation*}
\begin{split}
\sum_{n\le N} |r(n)|^2&=\sum_{n=1}^{\infty}|r(n)|^2
+O\bigg(\frac{L^2}{N^{1/2}} \sum_{A< p <B} \frac {(\log p)^{2 \nu}}{\sqrt{p}}\bigg)\\
&= \prod_p(1+r(p)^2)+O\bigg( \frac{L^2}{N^{1/2}} B^{3/2} (\log B)^{2 \nu}\bigg)\gg \prod_p(1+r(p)^2).
\end{split}
\end{equation*}
Choosing $\alpha=(\log L)^{-2}$, we see that the ratio of \eqref{error in 7} 
to $\sum_{n\le N} |r(n)|^2 \gg \prod_p(1+r(p)^2)$ is
\begin{equation*}
\begin{split}
&\ll h^\mu L N^{-\alpha} \, (\log B)^{\mu+1}\prod_{p}\Bigg(\frac{1+p^\alpha r(p)^2}{1+r(p)^2}\Bigg)
\\
&\ll    h^\mu L \,  (\log B)^{\mu+1} \exp\Bigg\{ -\alpha \log N + \sum_{A<p<B}(p^\alpha\!-\!1)\frac{L^2(\log p)^{2\nu}}{p}  \Bigg\}
\\
&\ll  h^\mu L \,  (\log B)^{\mu+1}  \exp\Bigg\{ -\alpha\log N+\frac{\alpha L^2}{2\nu\!+\!1}\Big((\log B)^{2\nu+1}-(\log A)^{2\nu+1}\Big)+O\big(\alpha^2L^2(\log B)^{2\nu+2}\big)\Bigg\}
\\
&\ll  h^\mu L \,  (\log B)^{\mu+1}  \exp\Bigg\{ -\frac{1}{2}\frac{\alpha L^2(\log A)^{2\nu+1}}{2\nu\!+\!1}\Bigg\}=o\!\left( h^\mu (\log N)^{1/2} (\log\log N)^{\mu-\nu+1/2} \right)
\end{split}
\end{equation*}
since $ L  (\log B)^{\mu+1} \ll (\log N)^{1/2} (\log\log N)^{\mu-\nu+1/2}$ by our choices of $A, B,$ and $L$. Combining the estimates, the lemma follows.
\end{proof}

\section{Proof of Theorem \ref{th1}}

In this section, we prove Theorem \ref{th1}. Our proof of \eqref{est1} follows from Lemma \ref{vertical}, while our proof of \eqref{est2} is a consequence of the following Omega theorem.

\begin{theorem}\label{log integral} 
Assume RH. Then as $t\to \infty$, we have
\[
\int_{t-h}^{t+h} \log|\zeta(\tfrac{1}{2}\!+\!iu)| \, du = \Omega_\pm\!\left( h \sqrt{ \frac{\log t}{\log\log t}} \right)
\]
uniformly for $h \in[0, (\log\log t)^{-1}].$
\end{theorem}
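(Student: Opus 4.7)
The plan is to apply the resonance method in the spirit of Soundararajan, combining Lemma~\ref{mvt} with Lemma~\ref{resonator}. Let $r^{\pm}(n)$ be the arithmetic functions produced by Lemma~\ref{resonator} with parameters $\mu = 1$ and $\nu = 2$ (the motivation for this choice is given below), set
\[
R^{\pm}(t) \;=\; \sum_{n \le N} r^{\pm}(n)\, n^{-it},
\]
and take $N = T^{\delta}$ for a small fixed $\delta > 0$. The key quantity is the twisted first moment
\[
M^{\pm}(h) \;:=\; \int_T^{2T} \left(\int_{t-h}^{t+h} \log\big|\zeta(\tfrac12\!+\!iu)\big|\, du\right) \big|R^{\pm}(t)\big|^{2}\, dt,
\]
which by Fubini equals $\int_{-h}^{h} \int_T^{2T} \log|\zeta(\tfrac12+i(t+v))| \, |R^{\pm}(t)|^{2}\, dt\, dv$.

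For each fixed $v \in [-h,h]$, write $\log|\zeta| = \operatorname{Re} \log \zeta$ and apply Lemma~\ref{mvt} with $\alpha = 1/2$ and with the external shift parameter $v$ in place of $h$. Since $r^{\pm}(n)$ is real-valued, taking real parts converts the $n^{-iv}$ factor in the diagonal sum into $\cos(v \log n)$. Integrating in $v$ over $[-h,h]$ then replaces $\cos(v \log n)$ by $2 \sin(h \log n)/\log n$, producing
\[
M^{\pm}(h) \;=\; 2T \sum_{mn \le N} \frac{\Lambda(n) \sin(h \log n)\, r^{\pm}(m)\, r^{\pm}(mn)}{\sqrt{n}\,(\log n)^{2}} \;+\; O\!\bigg( hN(\log TN)^{3/2+\varepsilon} \!\sum_{n \le N} |r^{\pm}(n)|^{2}\bigg).
\]
The main term is precisely the sum appearing in Lemma~\ref{resonator}; the choice $\mu = 1,\ \nu = 2$ is dictated by the target, since $\mu = 1$ yields the linear dependence on $h$ and $\mu - \nu + 1/2 = -1/2$ produces the exponent $-1/2$ on $\log \log T$.

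Invoking Lemma~\ref{resonator} gives $\pm M^{\pm}(h) \gg Th\sqrt{\log T/\log\log T}\sum_{n\le N}|r^{\pm}(n)|^{2}$, while the $O$-term is bounded by $hT^{\delta}(\log T)^{3/2+\varepsilon}\sum |r^{\pm}(n)|^{2}$, which is dominated by the main term whenever $\delta < 1$. Combined with the standard mean value bound $\int_T^{2T}|R^{\pm}(t)|^{2}\, dt \asymp T \sum_{n \le N}|r^{\pm}(n)|^{2}$ (valid for $N \le T^{1-\varepsilon}$), an averaging/pigeonhole argument produces $t^{\pm} \in [T, 2T]$ at which $\pm\int_{t^{\pm}-h}^{t^{\pm}+h}\log|\zeta(\tfrac12+iu)|\,du \gg h\sqrt{\log t^{\pm}/\log\log t^{\pm}}$; letting $T \to \infty$ yields the $\Omega_{\pm}$ result. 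The uniformity in $h \in [0, (\log\log t)^{-1}]$ is built in, because $r^{\pm}$ and $\delta$ do not depend on $h$ and the constant in Lemma~\ref{resonator} is uniform over the permissible range. The main obstacle is technical rather than conceptual: one must verify that after integration in $v$ the off-diagonal error from Lemma~\ref{mvt} remains subordinate to the resonator-amplified main term, which is what constrains $\delta$ to lie strictly below $1$.
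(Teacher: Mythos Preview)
Your proposal is correct and follows essentially the same line as the paper's proof: both set up the twisted moment $M^{\pm}(h)$, apply Lemma~\ref{mvt} with $\alpha=1/2$ and a shift, integrate the shift over $[-h,h]$ to produce the $\sin(h\log n)/(\log n)^2$ weight, and then invoke Lemma~\ref{resonator} with $\mu=1$, $\nu=2$ together with the Montgomery--Vaughan mean value for the denominator. The only notable difference is the choice of $N$: the paper takes $N=T(\log T)^{-2}$, pushing $N$ as close to $T$ as the error term from Lemma~\ref{mvt} allows, whereas you take $N=T^{\delta}$ with $\delta<1$; both choices suffice for the $\Omega_{\pm}$ statement, though the paper's choice yields a larger implied constant (by a factor $\delta^{-1/2}$).
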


\begin{proof}
We prove this theorem using Soundararajan's resonance method. 

Let $R(t)=\sum_{n\le N} r(n) n^{-it}$ and observe that
\begin{equation} \label{max}
\max_{T\le t \le 2T} \int_{t-h}^{t+h} \log\big|\zeta(\tfrac{1}{2}\!+\!iu)\big| \, du \ \ge \ \frac{ \int_T^{2T} \Big\{ \int_{t-h}^{t+h} \log|\zeta(\tfrac{1}{2}\!+\!iu)| \, du \Big\} \, |R(t)|^2 \, dt }{ \int_T^{2T} |R(t)|^2 \, dt }
\end{equation}
and
\begin{equation} \label{min}
\min_{T\le t \le 2T} \int_{t-h}^{t+h} \log\big|\zeta(\tfrac{1}{2}\!+\!iu)\big| \, du \ \le \ \frac{ \int_T^{2T} \Big\{ \int_{t-h}^{t+h} \log|\zeta(\tfrac{1}{2}\!+\!iu)| \, du \Big\} \, |R(t)|^2 \, dt }{ \int_T^{2T} |R(t)|^2 \, dt }.
\end{equation}
Making the substitution $u=t+h_1$, using Lemma \ref{mvt} with $\alpha=1/2$, and integrating with respect to $h_1$, the double integral in the numerators in \eqref{max} and \eqref{min} is
\begin{eqnarray} \label{num}
&=&\Re \int_{-h}^h \int_T^{2T} \log \zeta(\tfrac{1}{2}\!+\!it\!+\!ih_1)  |R(t)|^2 \, dt \, dh_1 \nonumber\\
&=& 2T \sum_{mn \le N} \frac{\Lambda(n) r(m) \overline{r(mn)} \sin(h \log n)}{\sqrt{n}( \log n)^2} + O\Bigg( h N (\log TN)^{3/2+\varepsilon} \sum_{n\le N} |r(n)|^2 \Bigg).
\end{eqnarray}
Furthermore, Montgomery and Vaughan's mean-value theorem for Dirichlet polynomials (Corollary 3 of \cite{MV2}) implies that
\begin{equation} \label{mv}
\int_T^{2T} |R(t)|^2 \, dt = \big( T + O(N) \big) \sum_{n\le N} |r(n)|^2.
\end{equation}
Choosing $N=T(\log T)^{-2}$, Lemma \ref{resonator} and equations \eqref{max}--\eqref{mv}  imply that 
\[
\max_{T\le t \le 2T} \int_{t-h}^{t+h} \log|\zeta(\tfrac{1}{2}\!+\!iu)| \, du \ \ge \ c_1 h \sqrt{ \frac{\log T}{\log \log T}}
\]
and
\[
\min_{T\le t \le 2T} \int_{t-h}^{t+h} \log|\zeta(\tfrac{1}{2}\!+\!iu)| \, du \ \le \ -c_2 h \sqrt{ \frac{\log T}{\log \log T}}
\]
uniformly for $h \in [0,(\log \log N)^{-1}]$, where $c_1$ and $c_2$ are (computable) positive constants. The theorem follows. 
\end{proof}

We now prove Theorem \ref{th1}.

\begin{proof}[Proof of Theorem \ref{th1}] We first prove \eqref{est1}. Assuming RH, \eqref{RH1} implies that
\[
\int_{-\infty}^{\infty} \frac{\log |\zeta(\tfrac12\!+\!it)|}{\frac14+t^2} \, dt=0.
\]
Since the integrand is even, it follows that
\[
I(T)=-2\int_T^{\infty} \frac{\log |\zeta(\tfrac12\!+\!it)|}{\frac14+t^2} \, dt.
\]
Integrating by parts and applying Lemma \ref{vertical} we have
\begin{equation} \notag
\begin{split}
I(T) &= -2 \int_T^{\infty} \frac{1}{\frac14+t^2} \, d\bigg(\int_T^t \log |\zeta(\tfrac12\!+\!iu)|  \, du\bigg) 
\\
&= -4 \int_T^{\infty} \frac{t  }{(\frac14+t^2)^2} \bigg(\int_T^t \log |\zeta(\tfrac12\!+\!iu)|  \, du \bigg) \, dt \\
&\ll
\int_T^{\infty}
\frac{1}{t^3} \frac{\log t}{(\log \log t)^2}  \, dt\ll \frac{1}{T^2} \frac{\log T}{ (\log \log T)^2}.
\end{split}
\end{equation}
This completes the proof of  \eqref{est1}.

\smallskip

We now prove  \eqref{est2}. Let $h \in [0,(\log \log t)^{-1}]$ and suppose, for sake of contradiction, that
\begin{equation*}
I(t) =o\Bigg( \frac{1}{t^2} \sqrt{\frac{\log t}{(\log \log t)^3}}\Bigg).
\end{equation*}
Then for $t-h\leq u\leq t+h$ we have
\begin{equation}\label{contra}
I(u)-I(t-h) =o\Bigg( \frac{1}{t^2} \sqrt{\frac{\log t}{(\log \log t)^3}}\Bigg),
\end{equation}
as well. Integrating by parts yields
\begin{equation*}
\begin{split}
\int_{t-h}^{t+h} \log\big |\zeta(\tfrac12\!+\!iu)\big| du= & \int_{t-h}^{t+h}
\big(\tfrac14+u^2\big) \, d\bigg(\int_{t-h}^{u}\frac{\log |\zeta(\tfrac12\!+\!iv)|}{\tfrac14+v^2}  \,  dv\bigg) \\
=&\Big(\tfrac14+(t+h)^2 \Big) \int_{t-h}^{t+h} \frac{\log |\zeta(\tfrac12\!+\!iv)|}{\frac14+v^2}  \,  dv\\
&\qquad \qquad  -\int_{t-h}^{t+h} 2u \, \bigg(\int_{t-h}^u \frac{\log |\zeta(\tfrac12\!+\!iv)|}{\frac14+v^2}  \, dv \bigg)  \, du.
\end{split}
\end{equation*}
Using the assumption \eqref{contra} twice, it follows that
\[
\int_{t-h}^{t+h} \log |\zeta(\tfrac12\!+\!iu)| \, du =o\Bigg(\sqrt{\frac{\log t}{(\log \log t)^3}} \Bigg).
\]
If $h=(\log\log t)^{-1}$, this contradicts Theorem \ref{log integral}, and thus proves \eqref{est2}.
\end{proof}

\section{Proof of Theorem \ref{th2}}
In this section, we use contour integration to prove 
Theorem \ref{th2}. We also show how this method can be modified to give
a new proof of \eqref{BSY uncond} that relies solely on standard techniques from complex analysis.
\begin{proof}[Proof of Theorem \ref{th2}]
First, suppose that $T$
is not an ordinate of a zero of $\zeta(s)$ and
consider
\[
\frac1i \int_{\frac12+iT }^{\frac12+i\infty}
\frac{\log \zeta(s)}{s(1-s)} \, ds.
\]
Let $\mathcal S$ be subset of the region $\sigma>1/2$ and $t>T$,
that excludes all the horizontal segments $1/2+i\gamma$ to $\beta+i\gamma$. 
It follows that
$\log \zeta(s)$ is a single-valued analytic function in $\mathcal S$. Moreover, along each branch cut from
$1/2+i\gamma$ to $\beta+i\gamma$ the values of 
$\log \zeta(s)$ on the upper and lower cuts differ by 
$2\pi i$. Therefore, moving
the contour in the above integral from $\Re(s)=1/2$ to $\Re(s)=\infty$ yields
\begin{equation} \label{contour}
\begin{split}
\frac1i \int_{\frac12+iT }^{\frac12+i\infty}
\frac{\log \zeta(s)}{s(1-s)} \, ds&=2 \pi \sum_{\substack{\gamma>T \\ \beta>1/2}} \int_{\frac12+i\gamma}^{\beta+i\gamma} \frac{1}{s(1-s)} \, ds+\frac{1}{i}
\int_{\frac12+iT}^{\infty+iT} \frac{\log \zeta(s)}{s(1-s)} \, ds.
\end{split}
\end{equation}
Also, we have
\begin{equation} \label{sum}
\begin{split}
 \int_{\frac12+i\gamma}^{\beta+i\gamma} \frac{1}{s(1-s)} \, ds
=\log(\rho)-\log(\tfrac12+i\gamma) 
-\log(1-\rho)+\log(\tfrac12-i\gamma).
\end{split}
\end{equation}

For $\sigma \geq 2$ we have $\log \zeta(s) \ll 2^{-\sigma}$
uniformly in $t$. From this and Lemma \ref{horizontal} it follows that
\[
\int_{\frac12+iT}^{\infty+iT} \frac{\log \zeta(s)}{s(1-s)} \, ds
\ll \frac{1}{T^2}\bigg(
\int_{\frac12}^{2}+\int_{2}^{\infty}\bigg) |\log \zeta(\sigma+iT)| \, d\sigma
\ll \frac{1}{T^2}\big(\log T+1\big).
\]
Taking the real parts in \eqref{contour}, and using the above estimate and \eqref{sum}, we deduce that
\[
\int_{T}^{\infty} \frac{\log |\zeta(\tfrac12+it)|}{\frac14+t^2} \, dt
=2\pi \sum_{\substack{\gamma>T \\ \beta>1/2}} 
\log \bigg|\frac{\rho}{1-\rho}\bigg|+O\bigg(\frac{1}{T^2} \log T \bigg).
\]
Similarly, it can be shown that
\[
\int_{-\infty}^{-T} \frac{\log |\zeta(\tfrac12+it)|}{\frac14+t^2} dt
=2\pi \sum_{\substack{\gamma<-T \\ \beta>1/2}}  \log \bigg|\frac{\rho}{1-\rho}\bigg|+O\bigg(\frac{1}{T^2} \log T \bigg).
\]
 Combining these two estimates and then 
 differencing the resulting formula with
 \eqref{BSY uncond}
completes the proof of the theorem in the case when $T \neq \gamma$. If $T=\gamma$, we note that 
for all sufficiently small
$\varepsilon>0$ the estimate in \eqref{I form} holds for $T=\gamma+\varepsilon$. The theorem now follows in this case by
letting $\varepsilon \rightarrow 0^+$.
\end{proof}

\begin{proof}[Proof of \eqref{BSY uncond}]
Consider the integral
\[
\frac1i \int_{\frac12-i\infty }^{\frac12+i\infty}
\frac{\log ((s-1)\zeta(s))}{s(1-s)} \, ds.
\]
Arguing as in the previous proof, we
move the contour from $\Re(s)=1/2$
to $\Re(s)=\infty$ and deduce that
\begin{equation} \label{contour 2}
\begin{split}
\frac1i \int_{\frac12-i\infty }^{\frac12+i\infty}
\frac{\log ((s-1)\zeta(s))}{s(1-s)} \, ds&=2 \pi \sum_{ \beta>1/2} \int_{\frac12+i\gamma}^{\beta+i\gamma} \frac{1}{s(1-s)} \, ds+
\frac{1}{i}\int_{\mathcal C} \frac{\log ((s-1)\zeta(s))}{s(1-s)} \, ds,
\end{split}
\end{equation}
where $\mathcal C$ is the positively oriented circle centered at $s=1$ with
radius $1/4$. By the calculus of residues  and the 
fact that $\lim_{s \rightarrow 1} ((s-1)\zeta(s))=1$ the last
integral equals zero.
Thus, 
by this and \eqref{sum}, 
taking the real parts in \eqref{contour 2} gives
\[
\int_{-\infty }^{\infty}
\frac{\log |(-\tfrac12+it)\zeta(\tfrac12+it)|}{\frac14+t^2} \, dt
=2 \pi \sum_{ \beta>1/2} \log \bigg|\frac{\rho}{1-\rho} \bigg|.
\]
Note that by residue calculus (or otherwise) we have
\[
\int_{-\infty }^{\infty} \frac{\log |-\tfrac12+it|}{\frac14+t^2}\, dt = \frac{1}{2}\int_{-\infty }^{\infty} \frac{\log(\frac{1}{4}+t^2)}{\frac14+t^2}\, dt =0.
\]
This completes the proof.
\end{proof}

\section{Montgomery's Omega theorem for $S(t)$}

Let $N(t)$ denote the number of non-trivial zeros $\rho=\beta+i\gamma$ of the Riemann zeta-function with $0<\gamma\le t.$ It is well-known that 
\[
N(t) = \frac{t}{2\pi} \log \frac{t}{2\pi} -\frac{t}{2\pi} + \frac{7}{8} + S(t) + O\Big(\frac{1}{t}\Big)
\]
for $t\geq10$. Here, if $t$ is not equal to an ordinate of a zero of $\zeta(s)$, the function $S(t)$ is defined by 
\[
S(t)=\frac{1}{\pi}\Im \log\zeta\big(\tfrac{1}{2}+it\big),
\] 
where the branch of logarithm is obtained by continuous variation along the line segments joining the points $2, 2 + it$, and $\frac{1}{2} + it$, starting with $\arg\zeta(2)=0$. If $t$ corresponds to an ordinate of a zero of $\zeta(s)$ we set
\[
 S(t) = \frac{1}{2} \ \! \lim_{\varepsilon\to 0} \big\{ S(t\!+\! \varepsilon)\!+\!S(t\!-\! \varepsilon) \big\}.
 \]
Assuming RH, it is known that
\[
\big|S(t)\big| \le \Big(\tfrac{1}{4} + o(1) \Big) \frac{\log t}{\log \log t}
\]
as $t\to \infty$ \cite{CCM}. In this section, we illustrate how Lemmas \ref{mvt} and \ref{resonator}  in \textsection 3 can be used to give a new proof of Montgomery's result \cite{M} that
\begin{equation} \label{omega}
S(t) = \Omega_\pm\!\left( \sqrt{\frac{\log t}{\log\log t}} \right)
\end{equation}
assuming RH.  Tsang \cite{Tsang} gave an alternate proof of \eqref{omega}. In contrast to the proofs of Montgomery and Tsang, our proof uses the resonance method. 

\begin{proof}[Proof of \eqref{omega}] Define the auxiliary function
\[
S_1(t) = \int_0^t S(u) \, du
\]
and note that
\begin{equation}\label{S1}
\max_{t\le u \le t+h} \pm S(u) \ge \frac{1}{h} \int_t^{t+h}\!\! \pm S(u) \ du = \frac{\pm\big( S_1(t\!+\!h)-S_1(t) \big)}{h}.  
\end{equation}

We use a result of Littlewood (see Theorem 3 of \cite{L}  or Theorem 9.9 of \cite{T}) that
\[
S_1(t) = \frac{1}{\pi} \int_{1/2}^{2} \log|\zeta(\sigma\!+\!it)| \, d\sigma+ O\big(1\big).
\]
Now taking the real part of the integral in Lemma \ref{mvt}, and integrating with respect to $\alpha$ from $1/2$ to $2$ yields
\begin{equation*}
\begin{split}
\int_T^{2T}S_1(t\!+\!h) |R(t)|^2 \, dt &= \frac{T}{\pi} \sum_{mn \leq N} \frac{\Lambda(n) r(m) \overline{r(mn)} }{ \sqrt{n} \, (\log n)^2} \cos\!\big(h\log n\big)+O\bigg( T\sum_{n \leq N}|r(n)|^2\bigg)
\\
& \qquad + O\bigg( N (\log TN)^{3/2} \sum_{n \leq N}|r(n)|^2\bigg) + O\bigg(\int_T^{2T} |R(t)|^2 \, dt \bigg). 
\end{split}
\end{equation*}
Choosing $N=T(\log T)^{-2}$ and noting that
\[
\int_T^{2T} |R(t)|^2 \, dt = \Big(T+O(N) \Big) \sum_{n \leq N}|r(n)|^2,
\]
we obtain
\[
\frac{\pm\int_T^{2T} \big( S_1(t\!+\!h)-S_1(t) \big) \, |R(t)|^2 \, dt }{\int_T^{2T} |R(t)|^2 \, dt} = \mp\frac{2}{\pi} \frac{\sum_{mn \leq N} \frac{\Lambda(n) r(m) \overline{r(mn)} }{ \sqrt{n} \, (\log n)^2} \sin^2\!\big(\tfrac{h}{2}\log n\big)}{\sum_{n \leq N}|r(n)|^2} + O\big(1\big).
\]
Using Lemma \ref{resonator} with $\mu=\nu=2$ to estimate the ratio of sums on the right-hand side of the above expression, we deduce that
\[
\max_{T\le t \le 2T} \pm \big( S_1(t\!+\!h)-S_1(t) \big) \gg h^2 \sqrt{ \log T \log\log T}
\]
uniformly for $h\in [0,(\log\log N)^{-1}]$. Combining this inequality with the observation in \eqref{S1} and choosing $h=(\log\log N)^{-1}$, the estimate \eqref{omega} follows. 
\end{proof}

We remark that using the resonance method in a different way, the estimate in \eqref{omega} can be refined.  In \cite{BMR}, assuming RH, it is shown that
\[ \max_{T\le t \le 2T} S(t) \ge \frac{1}{\pi} \sqrt{\frac{\log t}{\log\log t}} + O\!\left( \frac{\sqrt{\log t}}{\log\log t} \right) \]
and
\[ \min_{T\le t \le 2T} S(t) \le -\frac{1}{\pi} \sqrt{\frac{\log t}{\log\log t}} + O\!\left( \frac{\sqrt{\log t}}{\log\log t} \right). \]
These are conditional analogues of Soundararajan's unconditional Omega theorem for $\big|\zeta(\tfrac{1}{2}+it)\big|$ in \cite{Sound}. It does not seem, however, that the method in \cite{BMR} can be modified to prove Theorem \ref{log integral}.

\section*{Acknowledgments}
The authors learned of the question and conjecture of Borwein, Bradley, and Crandall from \cite{CP}. Part of this research was completed while the third author was visiting the Institut f\"{u}r Mathematik at the Universit\"{a}t Z\"{u}rich. He would like to thank the Institute, and especially Ashkan Nikeghbali, for the invitation.

\end{document}